\theoremstyle{plain}
\newtheorem{lemma}{Lemma}
\newtheorem{theorem}[lemma]{Theorem}
\newtheorem{corollary}[lemma]{Corollary}
\newtheorem{definition}[lemma]{Definition}
\title{Sums of Reciprocals of Generalized Triangular Numbers}
\author{Pawel Grzegrzolka}
\address{Syracuse University, Syracuse, USA}
\email{pgrzegrz@syr.edu}
\author{Jeffrey L. Meyer}
\address{Syracuse University, Syracuse, USA} 
\email{jlmeye01@syr.edu}
\date{\today} 
\keywords{triangular numbers, generalized triangular numbers, higher-order triangular numbers}
\subjclass[2020]{11B83, 40A05}
\begin{document}

\begin{abstract}
We compute the sum and the alternating sum of the reciprocals of triangular numbers using two standard methods from calculus: a telescoping series approach and a power series approach. We then extend these results to generalized (higher-order) triangular numbers and derive closed-form expressions for both the non-alternating and alternating series of all orders.
\end{abstract}

\maketitle
%\tableofcontents

\section{Introduction}

Let $T_n = \sum_{i=1}^{n} i=\frac{n(n+1)}{2}$ be the $n^{\text{th}}$ triangular number.
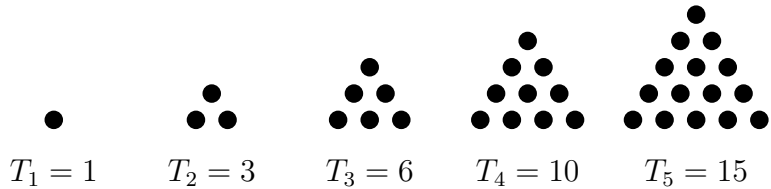
\begin{figure}[h]
\centering
\begin{tikzpicture}[scale=0.7]

% T1 triangle
\fill (0,0) circle (5pt);
\node at (0,-1) {$T_1 = 1$};

% T2 triangle
\fill (2.7,0) circle (5pt);
\fill (3.3,0) circle (5pt);
\fill (3,0.5) circle (5pt);
\node at (3,-1) {$T_2 = 3$};

% T3 triangle
\fill (5.4,0) circle (5pt);
\fill (6.0,0) circle (5pt);
\fill (6.6,0) circle (5pt);
\fill (5.7,0.5) circle (5pt);
\fill (6.3,0.5) circle (5pt);
\fill (6,1) circle (5pt);
\node at (6,-1) {$T_3 = 6$};

% T4 triangle
\fill (8.1,0) circle (5pt);
\fill (8.7,0) circle (5pt);
\fill (9.3,0) circle (5pt);
\fill (9.9,0) circle (5pt);
\fill (8.4,0.5) circle (5pt);
\fill (9.0,0.5) circle (5pt);
\fill (9.6,0.5) circle (5pt);
\fill (8.7,1.0) circle (5pt);
\fill (9.3,1.0) circle (5pt);
\fill (9.0,1.5) circle (5pt);
\node at (9,-1) {$T_4 = 10$};

% T5 triangle
\fill (11.0,0) circle (5pt);
\fill (11.6,0) circle (5pt);
\fill (12.2,0) circle (5pt);
\fill (12.8,0) circle (5pt);
\fill (13.4,0) circle (5pt);
\fill (11.3,0.5) circle (5pt);
\fill (11.9,0.5) circle (5pt);
\fill (12.5,0.5) circle (5pt);
\fill (13.1,0.5) circle (5pt);
\fill (11.6,1.0) circle (5pt);
\fill (12.2,1.0) circle (5pt);
\fill (12.8,1.0) circle (5pt);
\fill (11.9,1.5) circle (5pt);
\fill (12.5,1.5) circle (5pt);
\fill (12.2,2.0) circle (5pt);
\node at (12.2,-1) {$T_5 = 15$};
\end{tikzpicture}
\caption{The first five triangular numbers.}
\end{figure}
If you have taken calculus, you can likely deduce that the series of reciprocals of triangular numbers---taken either directly or with alternating signs---converges due to the quadratic denominator. What you may not know, but may wonder about, is to what values these sums converge.  That is, you may ponder the values of the sums
\vspace{-5pt}
\begin{equation}\label{easysums}
\sum_{n=1}^{\infty} \frac{1}{T_n} \quad\text{and}\quad \sum_{n=1}^{\infty} \frac{(-1)^{n+1}}{T_n}.
\end{equation}

\subsection*{Telescoping Series Approach}
Without realizing it, you may have already found the sum of the non-alternating series in your calculus class.  This problem is usually hidden as an exercise or example in a calculus textbook, typically in the section introducing series, and presented in the form $\sum_{n=1}^{\infty}\frac{1}{n(n+1)}$, without reference to the triangular numbers  (for example, see Example 6 in Section~8.2 of \cite{stewart}). A solution comes by way of the partial fraction decomposition and then the recognition of the series as a telescoping series.  We see that
\begin{align*}
\lim_{N\rightarrow\infty} \sum_{n=1}^{N}\frac{1}{n(n+1)}
=&\; \lim_{N\rightarrow\infty}\sum_{n=1}^{N}\left(\frac{1}{n} - \frac{1}{n+1}\right) \\
=&\;\lim_{N\rightarrow\infty} \left(1 - \frac{1}{2}+\frac{1}{2} - \frac{1}{3}+\frac{1}{3}- \frac{1}{4}+ \cdots +\frac{1}{N}-\frac{1}{N+1}\right)\\
=&\;\lim_{N\rightarrow\infty} \left(1-\frac{1}{N+1}\right) = 1.
\end{align*}
%The limit as $N\rightarrow \infty$ gives us that
%\begin{equation*}
%\sum_{n=1}^{\infty}\frac{1}{n(n+1)} =1.
%\end{equation*}
The sum of the reciprocals of triangular numbers is twice this value, namely
\begin{equation}\label{result12}
\sum_{n=1}^{\infty} \frac{1}{T_n}=\sum_{n=1}^{\infty} \frac{2}{n(n+1)}=2.
\end{equation}

An ambitious calculus student could apply a similar telescoping series approach to find the alternating sum of the reciprocals of triangular numbers. If we apply partial fraction decomposition to the alternating series, 
%$\displaystyle \sum_{n=1}^{N} \frac{(-1)^{n+1}}{n(n+1)}$
then, thanks to the telescoping nature of the terms and the accompanying alternating signs, nearly all the terms double instead of cancel, and we get that 

\begin{align*}
\sum_{n=1}^{N} \frac{(-1)^{n+1}}{n(n+1)} =&\; \sum_{n=1}^{N} (-1)^{n+1}\left(\frac{1}{n}-\frac{1}{n+1}\right)\\
%=&\;  \sum_{n=1}^{N}\left( (-1)^{n+1}\frac{1}{n}- (-1)^{n+1}\frac{1}{n+1}\right)\\
 %=&\;  \sum_{n=1}^{N}\left( (-1)^{n+1}\frac{1}{n}+ (-1)^{n+2}\frac{1}{n+1}\right)\\
%=&\; \sum_{n=1}^{N} (-1)^{n+1}\frac{1}{n} +\sum_{n=1}^{N} (-1)^{n+2}\frac{1}{n+1}\\
=&\; \sum_{n=1}^{N} (-1)^{n+1}\frac{1}{n} +\sum_{n=2}^{N+1} (-1)^{n+1}\frac{1}{n}\\
=&\; 1 + 2\sum_{n=2}^{N}(-1)^{n+1}\frac{1}{n} + (-1)^{N+2}\frac{1}{N+1}\\
=&\; 1 + 2\left(\sum_{n=1}^{N}(-1)^{n+1}\frac{1}{n} -1\right) + (-1)^{N+2}\frac{1}{N+1}.
\end{align*}
The limit as $N\rightarrow \infty$ gives us that
\begin{equation}\label{result2}
\sum_{n=1}^{\infty} \frac{(-1)^{n+1}}{n(n+1)} = 1 + 2(\log 2 -1) = 2 \log 2 -1,
\end{equation}
where we used the well-known fact that $ \lim_{N\to\infty}\sum_{n=1}^{N}\frac{(-1)^{n+1}}{n}=\log 2$.
We multiply equation (\ref{result2}) by $2$ to get the alternating sum of the reciprocals of triangular numbers.  The sum is
\begin{equation}\label{result22}
\sum_{n=1}^{\infty} \frac{(-1)^{n+1}}{T_n} = 2\sum_{n=1}^{\infty} \frac{(-1)^{n+1}}{n(n+1)} = 4 \log 2 -2.
\end{equation}

\subsection*{Power Series Approach}

That someone would recognize a particular series as a telescoping series seems rather fortuitous.  Moreover, most calculus students would have not yet learned or remember that $ \lim_{N\to\infty}\sum_{n=1}^{N}\frac{(-1)^{n+1}}{n}=\log 2$. A slightly later calculus student could more directly approach these two sums by way of power series (Section 8.6 in \cite{stewart}). If we start with the fact that
\begin{equation*}
\frac{1}{1-x} = \sum_{n=0}^{\infty} x^n
\end{equation*}
for $-1<x<1$, and integrate this, we get that
\begin{equation*}
-\log(1-x) = C + \sum_{n=0}^{\infty}\frac{x^{n+1}}{n+1} = C + \sum_{n=1}^{\infty} \frac{x^n}{n}.
\end{equation*}
We set $x=0$, and determine that $C=0$; thus,
\begin{equation}\label{eqn1}
-\log(1-x) = \sum_{n=1}^{\infty} \frac{x^n}{n}.
\end{equation}
We note that convergence is now for $-1\leq x<1$, and, when establishing the convergence at $x =-1$, we get the previously mentioned (and lovely) fact that
\begin{equation}\label{nicefact}
\sum_{n=1}^{\infty} \frac{(-1)^{n+1}}{n} = \log 2.
\end{equation}
If we press on and integrate the relationship in (\ref{eqn1}), we get that
\begin{equation*}
(1-x)\log(1-x) - (1-x) = C + \sum_{n=1}^{\infty}\frac{1}{n}\cdot\frac{x^{n+1}}{n+1},
\end{equation*}
where we determine that $C=-1$ by setting $x=0$.  In other words, we have
\begin{equation}\label{eqn2}
(1-x)\log(1-x) +x = \sum_{n=1}^{\infty}\frac{x^{n+1}}{n(n+1)},
\end{equation}
with the interval of convergence of the right-hand sum and equality of the expressions for $-1\leq x \leq 1$, as long as the equality at $x=1$ is understood via the limit as $x\rightarrow~\!1^{-}$. We can evaluate (\ref{eqn2}) at $x=-1$ and, with the help of l'H\^{o}pital's rule, at $x=1$, to get
\begin{equation*}\label{result1}
\sum_{n=1}^{\infty}\frac{(-1)^{n+1}}{n(n+1)}=2\log 2 -1 \qquad \text{and} \qquad \sum_{n=1}^{\infty}\frac{1}{n(n+1)}=1.
\end{equation*}
We multiply these results by 2 to reach the desired sums of the reciprocals of triangular numbers,
\begin{equation*}
\sum_{n=1}^{\infty} \frac{(-1)^{n+1}}{T_n} = 4\log 2 -2 \quad \text{and} \quad \sum_{n=1}^{\infty} \frac{1}{T_n} = 2.
\end{equation*}
Notice that these values are the same (thank goodness) as the values obtained in (\ref{result12}) and (\ref{result22}) with the telescoping series approach.

\section{Sums of Reciprocals of Generalized Triangular Numbers}

With enough grit, we can extend the results of the previous section to generalized triangular numbers.  We define generalized triangular numbers (also known as higher-order triangular numbers) as follows. First, let $T_0(n) =1$ for all $n \in \mathbb{N}$.  Then, for any positive integer $k$ (where $k$ denotes the order), define $T_{k}(n) = \sum_{i=1}^{n} T_{k-1}(i)$.  This means $T_1(n) = n$ and  $T_2(n) = n(n+1)/2$, the usual triangular numbers. The figure below gives the first few generalized triangular numbers of the first few orders.

\[
\begin{array}{c|cccccc}
 & n=1 &2 &3 &4 &5 &6 \\\hline
k=0 & 1 & 1 & 1 & 1 & 1 & 1 \\
k=1 & 1 & 2 & 3 & 4 & 5 & 6 \\
k=2 &
\tikzmarknode{t1}{1} &
\tikzmarknode{t2}{3} &
\tikzmarknode{t3}{6} &
\tikzmarknode{t4}{10} &
\tikzmarknode{t5}{15} &
21 \\
k=3 & 1 & 4 & 10 & 20 & 35  & 56 \\
k=4 & 1 & 5 & 15 & 35 & 70 & 126 \\
\end{array}
\]

\begin{tikzpicture}[overlay,remember picture]
  \node[red] at ($(t1)!0.5!(t2)$) {$+$};
  \node[red] at ($(t2)!0.5!(t3)$) {$+$};
  \node[red] at ($(t3)!0.5!(t4)$) {$+$};
  \node[red] at ($(t4)!0.5!(t5)$) {$+$};

\draw[red,thick,-{Stealth[length=1.5mm,width=2mm]}]
  ([yshift=-0.7mm]t1.south) -- ([yshift=-0.7mm]t5.south)
  -- ++(0,-2mm);
\end{tikzpicture}

By computing individual cases for values of $k$, one can guess that
\begin{equation*}
T_k(n) = \binom{n+k-1}{k}= \frac{n(n+1)\cdots(n+k-1)}{k!}.
\end{equation*}
The above formula is proved by noting that $T_0(n) = 1 = \binom{n-1}{0}$, along with an application of the ``hockey stick identity'' \cite{hockey-stick} to get that
\begin{equation*}
T_{k+1}(n) = \sum_{i=0}^{n} T_k(i) = \sum_{i=0}^{n}\binom{i+k-1}{k} = \binom{n+k}{k+1}.
\end{equation*}

For the reminder of this paper, we consider this generalization of the sums in (\ref{easysums}), namely
\begin{equation*}
\sum_{n=1}^{\infty} \frac{1}{T_k(n)} \quad\text{and}\quad \sum_{n=1}^{\infty} (-1)^{n+1} \frac{1}{T_k(n)}.
\end{equation*}
For $k=0$, both series diverge. For $k=1$, the first series diverges while the second one converges to $\log 2$ by (\ref{nicefact}). For $k=2$, the results are given in the previous section. To tackle other values of $k$, we employ the two approaches from the previous section, the telescoping series approach and the power series approach.

\subsection*{Telescoping Series Approach}

To use the telescoping series approach, we first need to recognize the telescoping nature of the series. We see this from the following lemma, which you can readily prove by using the closed form of a generalized triangular number.
%To use the telescoping series approach, we first need to recognize the telescoping nature of the series. We see this from the following lemma.

\begin{lemma}\label{partialfraction}
For any integer $k>1$,
\begin{equation*}
\frac{1}{T_k(n)} = \frac{k}{k-1}\left(\frac{1}{T_{k-1}(n)} - \frac{1}{T_{k-1}(n+1)}\right).
\end{equation*}
\end{lemma}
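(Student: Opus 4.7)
The plan is to prove the identity by direct computation using the closed-form expression
\[
T_k(n)=\frac{n(n+1)\cdots(n+k-1)}{k!}
\]
established just above the lemma statement. Since both sides are explicit rational functions of $n$, no induction or clever manipulation is needed; the lemma reduces to a tidy algebraic cancellation.

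First I would write out the difference on the right-hand side over a common denominator. Using the closed form,
\[
\frac{1}{T_{k-1}(n)}-\frac{1}{T_{k-1}(n+1)}
=\frac{(k-1)!}{n(n+1)\cdots(n+k-2)}-\frac{(k-1)!}{(n+1)(n+2)\cdots(n+k-1)}.
\]
The two denominators share the common factor $(n+1)(n+2)\cdots(n+k-2)$, so combining them yields a single fraction whose denominator is $n(n+1)\cdots(n+k-1)$ and whose numerator is $(k-1)!\bigl[(n+k-1)-n\bigr]=(k-1)!\,(k-1)$.

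Next I would multiply by $\frac{k}{k-1}$ as in the statement. The factor $k-1$ in the denominator cancels against the $k-1$ that arose in the numerator, leaving $k\cdot (k-1)!=k!$ on top. Hence
\[
\frac{k}{k-1}\left(\frac{1}{T_{k-1}(n)}-\frac{1}{T_{k-1}(n+1)}\right)
=\frac{k!}{n(n+1)\cdots(n+k-1)}=\frac{1}{T_k(n)},
\]
which is the claimed identity. The hypothesis $k>1$ is used precisely to ensure that the factor $\frac{k}{k-1}$ is defined, and it is also the reason the identity has a nontrivial telescoping flavor (for $k=1$ the right-hand side would collapse to a difference of constants).

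The only real obstacle is clerical: keeping the shifted products $n(n+1)\cdots(n+k-2)$ and $(n+1)(n+2)\cdots(n+k-1)$ straight when combining fractions. Once one sees that these products differ only in the outermost factors $n$ versus $n+k-1$, the cancellation $(n+k-1)-n=k-1$ is immediate, and the remaining arithmetic is mechanical.
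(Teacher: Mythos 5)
Your proof is correct and is essentially the paper's argument run in the opposite direction: the paper posits the decomposition with unknown coefficients $A$ and $B$ and solves for them, while you start from the stated right-hand side and verify it directly via the same common-denominator computation and the same cancellation $(n+k-1)-n=k-1$. Either direction is fine; yours is slightly more direct since the coefficients are already given in the statement.
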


\iffalse
\begin{proof}
We work out a partial, partial fraction decomposition,
\begin{align*}
\frac{k!}{n(n+1)\cdots(n+k-1)}
  &= \frac{A(k-1)!}{n(n+1)\cdots(n+k-2)} \\
  &\quad + \frac{B(k-1)!}{(n+1)(n+2)\cdots(n+k-1)}\\
  &= \frac{A(n+k-1)(k-1)! + Bn(k-1)!}{n(n+1)\cdots(n+k-1)}\\
  &= \frac{(A+B)n(k-1)! + A(k-1)(k-1)!}{n(n+1)\cdots(n+k-1)}.
\end{align*}
This means that $A+B = 0$ and $A(k-1)(k-1)! = k!$; thus $A = k/(k-1)$ and $B = -k/(k-1)$.
% so that we have
%\begin{equation*}
%A = \frac{k}{k-1}\quad\text{and}\quad B = -\frac{k}{k-1}. \qedhere
%\end{equation*}
\end{proof}
\fi
This lemma together with the telescoping series approach give us the sum of the reciprocals of the generalized triangular numbers.
\begin{theorem}\label{telescope2}
For any integer $k>1$,
\begin{equation*}
\sum_{n=1}^{\infty} \frac{1}{T_k(n)} = \frac{k}{k-1}.
\end{equation*}
\end{theorem}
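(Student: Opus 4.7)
The plan is to apply Lemma \ref{partialfraction} directly to turn the partial sums into a telescoping sum, and then let the upper limit tend to infinity. Since the lemma gives
\[
\frac{1}{T_k(n)} = \frac{k}{k-1}\left(\frac{1}{T_{k-1}(n)} - \frac{1}{T_{k-1}(n+1)}\right),
\]
the partial sum $\sum_{n=1}^{N}\tfrac{1}{T_k(n)}$ collapses to $\tfrac{k}{k-1}\bigl(\tfrac{1}{T_{k-1}(1)} - \tfrac{1}{T_{k-1}(N+1)}\bigr)$.

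Next, I would evaluate the two boundary terms. From the closed form $T_{k-1}(1) = \binom{k-1}{k-1} = 1$, so the left boundary contributes $\tfrac{k}{k-1}$. For the right boundary, since $k > 1$ we have $T_{k-1}(N+1) = \binom{N+k-1}{k-1}$, which is a polynomial in $N$ of degree $k-1 \geq 1$, and hence tends to infinity as $N \to \infty$; therefore $\tfrac{1}{T_{k-1}(N+1)} \to 0$.

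Combining these observations and taking $N \to \infty$ immediately yields
\[
\sum_{n=1}^{\infty} \frac{1}{T_k(n)} = \frac{k}{k-1}.
\]
There is essentially no obstacle here: the entire argument is a bookkeeping exercise once Lemma \ref{partialfraction} is in hand. The only point that needs a moment's care is confirming that the hypothesis $k > 1$ is used both to justify the partial fraction step (where $k-1$ appears in a denominator) and to guarantee the vanishing of the tail term $1/T_{k-1}(N+1)$; the case $k=1$ genuinely fails, which is consistent with the divergence of the harmonic series noted in the preceding discussion.
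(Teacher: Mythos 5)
Your proposal is correct and follows exactly the same route as the paper's proof: apply Lemma \ref{partialfraction}, telescope the partial sums to $\frac{k}{k-1}\bigl(1 - \frac{1}{T_{k-1}(N+1)}\bigr)$, and let $N \to \infty$. The only difference is that you spell out why $T_{k-1}(1)=1$ and why the tail term vanishes, details the paper leaves implicit.
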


\begin{proof}
From Lemma~\ref{partialfraction}, we have that
\begin{align*}
\lim_{N\rightarrow\infty} \sum_{n=1}^{N} \frac{1}{T_k(n)} =&\; \lim_{N\rightarrow\infty} \sum_{n=1}^{N}\frac{k}{k-1}\left(\frac{1}{T_{k-1}(n)} - \frac{1}{T_{k-1}(n+1)}\right)\\
=&\; \lim_{N\rightarrow\infty} \frac{k}{k-1}\left(1 - \frac{1}{T_{k-1}(N+1)}\right) = \frac{k}{k-1}. \qedhere
\end{align*}
\end{proof}

We can also apply the telescoping series approach to the alternating series, where, as before, most terms double instead of cancel.

\begin{theorem}\label{telescope3}
For any integer $k>1$,
\begin{equation*}
\sum_{n=1}^{\infty}\frac{(-1)^{n+1}}{T_k(n)} = \frac{k}{k-1}\left(2\sum_{n=1}^{\infty}\frac{(-1)^{n+1}}{T_{k-1}(n)} - 1 \right).
\end{equation*}
\end{theorem}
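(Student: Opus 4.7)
The plan is to mimic the telescoping argument used in the introduction for the $k=2$ case, now applied at a general level using Lemma \ref{partialfraction}. The starting point is that, by that lemma,
\[
\sum_{n=1}^{N}\frac{(-1)^{n+1}}{T_k(n)} = \frac{k}{k-1}\sum_{n=1}^{N}(-1)^{n+1}\!\left(\frac{1}{T_{k-1}(n)}-\frac{1}{T_{k-1}(n+1)}\right),
\]
so everything reduces to understanding the finite sum on the right and passing to the limit.

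Next I would split the right-hand sum into two pieces. In the first, the summation index stays the same, giving $\sum_{n=1}^{N}(-1)^{n+1}/T_{k-1}(n)$. In the second, I would reindex with $m=n+1$, which turns it into $\sum_{m=2}^{N+1}(-1)^{m+1}/T_{k-1}(m)$, because the sign flip $(-1)^{n+2}=(-1)^{m+1}$ precisely converts the minus sign in the partial fraction into agreement with the first sum. Combining the two sums, the terms for $2\le n\le N$ appear in both and double, the $n=1$ term appears once (and equals $1/T_{k-1}(1)=1$), and the $n=N+1$ term appears once, yielding
\[
1 + 2\sum_{n=2}^{N}\frac{(-1)^{n+1}}{T_{k-1}(n)} + \frac{(-1)^{N+2}}{T_{k-1}(N+1)}.
\]
Rewriting $\sum_{n=2}^{N}$ as $\sum_{n=1}^{N} - 1$ then collects the expression into $2\sum_{n=1}^{N}(-1)^{n+1}/T_{k-1}(n) - 1 + (-1)^{N+2}/T_{k-1}(N+1)$.

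Finally, I would multiply by $k/(k-1)$ and send $N\to\infty$. Since $k\ge 2$ we have $T_{k-1}(N+1)\to\infty$, so the boundary term disappears, and the inner sum converges (for $k-1\ge 1$ the alternating series test applies, or one can quote the already-known values for $k-1=1,2$ and induct). This yields exactly the claimed identity. The only place requiring any care is the bookkeeping of indices and signs in the split-and-reindex step; the vanishing of the boundary term and the convergence of the inner alternating series are immediate, so I do not expect a genuine obstacle here—this theorem is essentially a structural consequence of Lemma \ref{partialfraction} combined with the same doubling phenomenon observed in the $k=2$ computation.
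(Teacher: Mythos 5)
Your proposal is correct and follows essentially the same route as the paper: apply Lemma \ref{partialfraction} to the partial sum, split and reindex so that the interior terms double rather than cancel, isolate the $n=1$ and $n=N+1$ boundary contributions, and let $N\to\infty$ so that the term $\frac{(-1)^{N+2}}{T_{k-1}(N+1)}$ vanishes. Your added remark justifying convergence of the inner alternating series is a small bonus the paper leaves implicit, but the argument is otherwise identical.
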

\begin{proof}
Again, from Lemma~\ref{partialfraction}, we have
\begin{align*}
\sum_{n=1}^{N} \frac{(-1)^{n+1}}{T_k(n)} =&\;  \sum_{n=1}^{N}(-1)^{n+1} \frac{k}{k-1}\left(\frac{1}{T_{k-1}(n)} - \frac{1}{T_{k-1}(n+1)}\right)\\
%=&\; \frac{k}{k-1}\sum_{n=1}^{N}(-1)^{n+1}\left(\frac{1}{T_{k-1}(n)} - \frac{1}{T_{k-1}(n+1)}\right)\\
%=&\; \frac{k}{k-1}\sum_{n=1}^{N}\left(\frac{(-1)^{n+1}}{T_{k-1}(n)} + \frac{(-1)^{n+2}}{T_{k-1}(n+1)}\right)\\
=&\; \frac{k}{k-1}\left(\sum_{n=1}^{N}\frac{(-1)^{n+1}}{T_{k-1}(n)} + \sum_{n=1}^{N}\frac{(-1)^{n+2}}{T_{k-1}(n+1)}\right)\\
=&\; \frac{k}{k-1}\left(\sum_{n=1}^{N}\frac{(-1)^{n+1}}{T_{k-1}(n)} + \sum_{n=2}^{N+1}\frac{(-1)^{n+1}}{T_{k-1}(n)}\right)\\
=&\; \frac{k}{k-1}\left(2\sum_{n=1}^{N}\frac{(-1)^{n+1}}{T_{k-1}(n)}+ \frac{(-1)^{N+2}}{T_{k-1}(N+1)}-1 \right).
\end{align*}
Upon computing the limit as $N\rightarrow \infty$, we get that
\begin{align*}
\sum_{n=1}^{\infty}\frac{(-1)^{n+1}}{T_k(n)} =&\; \frac{k}{k-1}\left(2\sum_{n=1}^{\infty}\frac{(-1)^{n+1}}{T_{k-1}(n)} - 1 \right). \qedhere
\end{align*}
\end{proof}

Note that this result is not as satisfying as the one for the usual triangular numbers, as it involves the sum of the generalized triangular numbers of one degree lower. The recursive nature of the result is not surprising (though it appears unrelated), given the recursive definition of $T_k(n)$.  However, its advantage is that one can easily compute the alternating sum of generalized triangular numbers for the first few values of $k$. This, in turn, can reveal a pattern leading to a more general, closed form of this sum. The reader is encouraged to find the pattern and the associated closed form before reading any further.

\begin{theorem}\label{telescope4}
For any integer $k>1$,
\begin{equation*}
\sum_{n=1}^{\infty}\frac{(-1)^{n+1}}{T_k(n)} = k2^{k-1}\log 2 - k\sum_{i=1}^{k-1}\frac{2^{k-1-i}}{i}.
\end{equation*}
\end{theorem}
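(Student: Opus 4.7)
The natural plan is induction on $k$, using Theorem~\ref{telescope3} as the recursion that drives the step. In fact, the stated formula continues to make sense (and to be correct) at $k=1$: the empty sum $\sum_{i=1}^{0}$ vanishes, and the right-hand side reduces to $\log 2$, which matches $\sum_{n=1}^{\infty}(-1)^{n+1}/T_1(n)=\sum_{n=1}^{\infty}(-1)^{n+1}/n=\log 2$ by (\ref{nicefact}). So I would take $k=1$ as the base case, which is nothing more than this recognition.

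For the inductive step, I would assume the formula at level $k-1$, namely
\begin{equation*}
\sum_{n=1}^{\infty}\frac{(-1)^{n+1}}{T_{k-1}(n)} = (k-1)2^{k-2}\log 2 - (k-1)\sum_{i=1}^{k-2}\frac{2^{k-2-i}}{i},
\end{equation*}
and substitute it into the recursion provided by Theorem~\ref{telescope3}. Multiplying through by $2$ inside the parentheses doubles each power of $2$, and the outer factor $k/(k-1)$ collapses the prefactor $(k-1)$ to produce the desired leading term $k2^{k-1}\log 2$ and a running sum of the form $-k\sum_{i=1}^{k-2}2^{k-1-i}/i$. The only leftover is the $-1$ from Theorem~\ref{telescope3}, which contributes $-k/(k-1)$ after multiplication by $k/(k-1)$.

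The key algebraic observation--really the only thing to watch--is that this leftover $-k/(k-1)$ is exactly the missing $i=k-1$ term needed to extend the running sum from $\sum_{i=1}^{k-2}$ to $\sum_{i=1}^{k-1}$: indeed, the $i=k-1$ term is $2^{k-1-(k-1)}/(k-1)=1/(k-1)$, and multiplying by $k$ gives $k/(k-1)$. Once this is noticed, the two expressions match term-by-term and the induction closes.

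I do not anticipate a real obstacle. The manipulation is routine bookkeeping with exponents of $2$ and an index shift; the only subtlety is being careful with the base case $k=1$ so that the empty-sum convention is made explicit, and double-checking the direction of the sign on the leftover constant. An alternative, if one prefers to avoid invoking the $k=1$ case, is to take $k=2$ as the base case (verified directly against (\ref{result22})) and run the induction from there; the inductive computation is identical.
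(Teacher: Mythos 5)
Your proposal is correct and follows essentially the same route as the paper: induction on $k$ driven by the recursion in Theorem~\ref{telescope3}, with the leftover $-k/(k-1)$ absorbed as the $i=k-1$ term of the sum. The only cosmetic difference is your choice of $k=1$ (with the empty-sum convention) as the base case rather than $k=2$, which the paper itself notes is legitimate.
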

\begin{proof}
%We use induction on $k$. For $k=2$, we get $\sum_{n=1}^{\infty}\frac{(-1)^{n+1}}{T_2(n)} =4\log 2 -~2$, an equation proved in two different ways in the first section of this paper.
We proceed by induction on $k$. For $k=2$, we get (\ref{result22}), an equation proved in two different ways in the first section of this paper. Now, assume the theorem is true for $k$. %The goal is to show
%\[\sum_{n=1}^{\infty}\frac{(-1)^{n+1}}{T_{k+1}(n)} = (k+1)2^{k}\log 2 - (k+1)\sum_{i=1}^{k}\frac{1}{i}2^{k-i}.\]
By Theorem \ref{telescope3}, we know that
\[\sum_{n=1}^{\infty}\frac{(-1)^{n+1}}{T_{k+1}(n)} = \frac{k+1}{k}\left(2\sum_{n=1}^{\infty}\frac{(-1)^{n+1}}{T_{k}(n)} - 1 \right).\]
We apply the inductive hypothesis to get the desired result.
\begin{align*}
%\sum_{n=1}^{\infty}\frac{(-1)^{n+1}}{T_{k+1}(n)} =&\; \frac{k+1}{k}\left(2\sum_{n=1}^{\infty}\frac{(-1)^{n+1}}{T_{k}(n)} - 1 \right)\\
\sum_{n=1}^{\infty}\frac{(-1)^{n+1}}{T_{k+1}(n)}=&\; \frac{k+1}{k}\Biggl(2\biggl(k2^{k-1}\log 2 - k\sum_{i=1}^{k-1}\frac{2^{k-1-i}}{i}\biggr) - 1 \Biggr)\\
=&\; \frac{k+1}{k} \left( k2^{k}\log 2 - k \sum_{i=1}^{k-1}\frac{2^{k-i}}{i}-1\right)\\
=&\; (k+1)2^{k}\log 2 - (k+1) \left(\sum_{i=1}^{k-1}\frac{2^{k-i}}{i}+\frac{1}{k}\right)\\
=&\; (k+1)2^{k}\log 2 - (k+1) \sum_{i=1}^{k}\frac{2^{k-i}}{i}. \qedhere
\end{align*}
\end{proof}
It is worth pointing out that the formula in Theorem \ref{telescope4} also holds for $k=1$ (as long as we interpret an empty sum as $0$) even though the formula in Theorem \ref{telescope3} is not defined for $k=1$.

%With the telescoping series approach, we found the general forms for the sum and the alternating sum of the reciprocals of generalized triangular numbers for any integer $k>1$, namely
%\[\sum_{n=1}^{\infty} \frac{1}{T_k(n)} = \frac{k}{k-1} \quad \text{and} \quad  \sum_{n=1}^{\infty}\frac{(-1)^{n+1}}{T_k(n)} = k2^{k-1}\log 2 - k\sum_{i=1}^{k-1}\frac{2^{k-1-i}}{i}.\]

With the telescoping series approach, we found the general forms for the sum and the alternating sum of the reciprocals of generalized triangular numbers for any integer $k>1$.  Combining this with our earlier results for $k=0$ and $k=1$ gives us the following table.

\[
\renewcommand{\arraystretch}{1.5}
\begin{array}{c|c@{\qquad \qquad}c}
 & \displaystyle \sum_{n=1}^{\infty} \frac{1}{T_k(n)}
 & \displaystyle \sum_{n=1}^{\infty}\frac{(-1)^{n+1}}{T_k(n)} \\
\hline
k=0 & \text{divergent} & \text{divergent} \\
k=1 & \text{divergent} & \log 2 \\
k>1 & \displaystyle \frac{k}{k-1}
    & \displaystyle k2^{k-1}\log 2
      - k\sum_{i=1}^{k-1}\frac{2^{k-1-i}}{i}
\end{array}
\]

\subsection*{Power Series Approach}

The telescoping series approach for $k>1$ was dependent on recognizing that both series exhibited telescoping behavior. In other words, the telescoping series approach was dependent on Lemma \ref{partialfraction}. Is it possible to obtain the same results without using the telescoping nature of both series? In other words, can we obtain the same answers by using the power series approach?

When we used the power series approach on the usual triangular numbers, we integrated $ \frac{1}{1-x} = \sum_{n=0}^{\infty} x^n$ twice. For the generalized triangular numbers of the $k^{\text{th}}$ order, we will need to integrate that equation $k$ times. Doing so leads to certain regular constants of integration, which we define now.

\begin{definition}\label{H_definition}
For any integer $j \geq 1$, define 
\[C_j =  \frac{(-1)^j}{(j-1)!}H_{j-1},\]
where $H_0 = 0$ and $H_\ell = \sum_{i=1}^{\ell}\frac{1}{i}$; that is, $H_\ell$ is the $\ell^{\text{th}}$ partial sum of the harmonic series.
\end{definition}

%The coefficients $C_j$ satisfy a recursive formula given by the following lemma.

The coefficients $C_j$ satisfy a recursive formula given by the following lemma, which one can readily prove by noting that $\displaystyle H_{j-1}=H_{j-2}+\frac{1}{j-1}$ for any integer $j \geq 2$.

\begin{lemma}\label{lemma1}
For any integer $j\geq 2$, 
\[C_{j}=\dfrac{-C_{j-1}}{j-1}+\dfrac{(-1)^{j}}{(j-1)!(j-1)}.\]
\end{lemma}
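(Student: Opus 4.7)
The plan is to prove Lemma~\ref{lemma1} by direct substitution: unpack the definition of $C_j$ and $C_{j-1}$, and then recognize the partial sum identity $H_{j-1} = H_{j-2} + \tfrac{1}{j-1}$ as the engine behind the recursion. Since the lemma is purely an algebraic identity about the constants $C_j$, there is no convergence or analytic subtlety — it is a one-line verification disguised as a recurrence.

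First I would start from the right-hand side of the claimed identity and substitute $C_{j-1} = \frac{(-1)^{j-1}}{(j-2)!} H_{j-2}$. Simplifying the sign and factorial gives
\[
\frac{-C_{j-1}}{j-1} = \frac{(-1)^{j}}{(j-1)!}\, H_{j-2},
\]
so that the right-hand side of the recurrence becomes
\[
\frac{(-1)^{j}}{(j-1)!}\, H_{j-2} + \frac{(-1)^{j}}{(j-1)!\,(j-1)} = \frac{(-1)^{j}}{(j-1)!}\left(H_{j-2} + \frac{1}{j-1}\right).
\]
Then I would invoke $H_{j-1} = H_{j-2} + \tfrac{1}{j-1}$, valid for $j \geq 2$ (noting that the case $j=2$ uses $H_0 = 0$, consistent with the definition), to collapse the parenthesis to $H_{j-1}$. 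The expression then equals $\frac{(-1)^j}{(j-1)!} H_{j-1}$, which is exactly $C_j$ by Definition~\ref{H_definition}.

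There is really no obstacle here — the only thing to be mildly careful about is bookkeeping with the sign $(-1)^{j-1} = -(-1)^j$ and the factorial shift $(j-2)!(j-1) = (j-1)!$, and verifying that the case $j=2$ does not fall outside the definition of $H_{j-2}$ (it gives $H_0 = 0$, which the definition explicitly allows). With those checks in place the identity follows in two lines.
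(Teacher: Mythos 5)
Your proof is correct and follows essentially the same route as the paper's: both unpack Definition~\ref{H_definition} and use $H_{j-1}=H_{j-2}+\tfrac{1}{j-1}$ together with $(j-2)!(j-1)=(j-1)!$, the only cosmetic differences being that you work from the right-hand side to the left while the paper goes the other way, and that you absorb the $j=2$ case into the general computation (via $H_0=0$) where the paper checks it separately.
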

\iffalse
\begin{proof}
Since $C_1=0$ and $C_2=1$, the lemma is true for $j=2$. For $j \geq 3$, we get that
\begin{align*}
C_{j} & = \frac{(-1)^{j}}{(j-1)!} H_{j-1} = \frac{(-1)^{j}}{(j-1)!}H_{j-2} +  \frac{(-1)^{j}}{(j-1)!}\frac{1}{j-1}\\
 & = \frac{(-1)^{j}}{(j-2)!(j-1)} H_{j-2} +  \frac{(-1)^{j}}{(j-1)!(j-1)} = \frac{-C_{j-1}}{j-1} +  \frac{(-1)^{j}}{(j-1)!(j-1)}. \qedhere
\end{align*}
\end{proof}
\fi

We can now state the result of integrating $ \frac{1}{1-x} = \sum_{n=0}^{\infty} x^n$ repeatedly.

\begin{theorem}\label{main_theorem}  For any integer $k \geq 1$ and for $-1\leq x \leq 1$ (if $k=1$, convergence is for $-1\leq x <1$),
\vspace{-10pt}
\begin{multline}
 (-1)^k\frac{(1-x)^{k-1}}{(k-1)!}\log(1-x) - C_k (1-x)^{k-1} + \sum_{j=1}^{k}
C_j\frac{x^{k-j}}{(k-j)!}\\ = \sum_{n=1}^{\infty}\frac{x^{n+k-1}}{n(n+1)\cdots(n+k-1)},\label{maineqn}
\end{multline}
where the case of $x=1$ is understood via the limit as $x\rightarrow 1^{-}$.
\end{theorem}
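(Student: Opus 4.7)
My plan is to proceed by induction on $k$. The base case $k=1$ reduces the claimed identity to $-\log(1-x)=\sum_{n=1}^\infty x^n/n$, which is equation (\ref{eqn1}), once one observes that $C_1=(-1)^1 H_0/0!=0$ makes both the $-C_1(1-x)^0$ term and the $C_1 x^0/0!$ term vanish. For the inductive step, I assume the identity at stage $k\ge 1$ and integrate both sides of (\ref{maineqn}) from $0$ to $x$; the claim is that the result is exactly the identity at stage $k+1$.

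The right side behaves well under term-by-term integration: $\sum_{n=1}^\infty x^{n+k-1}/[n(n+1)\cdots(n+k-1)]$ becomes $\sum_{n=1}^\infty x^{n+k}/[n(n+1)\cdots(n+k)]$, which is the right side of the $(k+1)$-identity. On the left, the polynomial piece $-C_k(1-x)^{k-1}$ integrates to $C_k(1-x)^k/k$, each term $C_j x^{k-j}/(k-j)!$ integrates to $C_j x^{(k+1)-j}/((k+1)-j)!$ (already in the desired $(k+1)$-form), and the logarithmic term requires a single integration by parts:
\[\int (1-t)^{k-1}\log(1-t)\,dt=-\frac{(1-t)^k}{k}\log(1-t)+\frac{(1-t)^k}{k^2},\]
producing the expected $(-1)^{k+1}(1-x)^k\log(1-x)/k!$ plus a leftover $(-1)^k(1-x)^k/(k\cdot k!)$ term.

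Collecting the $(1-x)^k$ contributions on the left yields the coefficient $(-1)^k/(k\cdot k!)+C_k/k$, which equals $-C_{k+1}$ by Lemma \ref{lemma1} applied at $j=k+1$. The constant of integration is fixed by evaluating at $x=0$: the right side vanishes, while the antiderivative of the left evaluates to this same $-C_{k+1}$, so subtracting it adds a free constant $+C_{k+1}=C_{k+1}\cdot x^0/0!$, which is precisely the new $j=k+1$ summand needed to extend $\sum_{j=1}^{k}$ to $\sum_{j=1}^{k+1}$. Together these observations assemble the identity at stage $k+1$.

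The principal obstacle is the bookkeeping just described: verifying that the single recursion of Lemma \ref{lemma1} simultaneously produces the correct $(1-x)^k$ coefficient \emph{and} supplies the new constant term of the polynomial sum. A secondary concern is endpoint behaviour: the identity holds on $(-1,1)$ by standard term-by-term integration, convergence at $x=-1$ follows from the alternating series test since the denominators grow polynomially, and for $k\ge 2$ Abel's theorem gives continuity of both sides at $x=1^-$ (the case $k=1$ diverges at $x=1$, as noted in the statement).
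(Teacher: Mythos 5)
Your proposal is correct and follows essentially the same route as the paper: induction on $k$ starting from $-\log(1-x)=\sum_{n\ge 1}x^n/n$, termwise integration with one integration by parts on the logarithmic term, identification of the $(1-x)^k$ coefficient as $-C_{k+1}$ via Lemma~\ref{lemma1}, and evaluation at $x=0$ to show the constant of integration is exactly the new $j=k+1$ summand. Your remarks on endpoint behaviour (Abel's theorem at $x=1$, convergence at $x=-1$) are a slightly more explicit treatment of a point the paper leaves implicit, but they do not change the argument.
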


\begin{proof}
When $k=1$, note that $C_1 = 0$ and we get the well-known relationship noted in (\ref{eqn1}).
%, as is well-known from calculus II (and already stated earlier in this paper), that
%\begin{equation*}
%-\log(1-x) = \sum_{n=1}^{\infty}\frac{x^n}{n}.
%\end{equation*}
Assume that (\ref{maineqn}) holds for $k$.  We integrate (by parts) to get that
\vspace{-10pt}
\begin{multline*}
(-1)^{k+1}\frac{(1-x)^k}{k(k-1)!}\log(1-x) +(-1)^{k}\frac{(1-x)^k}{(k-1)!k^2}+ \frac{C_k}{k} (1-x)^k\\ + \sum_{j=1}^{k}
C_j\frac{x^{k-j+1}}{(k-j+1)!} + C = \sum_{n=1}^{\infty}\frac{x^{n+k}}{n(n+1)\cdots(n+k-1)(n+k)},
\end{multline*}
where $C$ is the constant of integration. Factoring $(1-x)^k$ from the second and third term gives
\begin{multline}\label{mainthm1}
(-1)^{k+1}\frac{(1-x)^k}{k(k-1)!}\log(1-x) +\left(\frac{(-1)^{k}}{(k-1)!k^2}+\frac{C_k}{k}\right) (1-x)^k\\ + \sum_{j=1}^{k}
C_j\frac{x^{k-j+1}}{(k-j+1)!} + C = \sum_{n=1}^{\infty}\frac{x^{n+k}}{n(n+1)\cdots(n+k-1)(n+k)}.
\end{multline}
By Lemma \ref{lemma1}, the coefficient of $(1-x)^k$ equals $-C_{k+1}$. We use this fact and set $x= 0$ in (\ref{mainthm1}) to see that $C=C_{k+1}$. Thus, we have
\vspace{-10pt}
\begin{multline}\label{mainthm2}
(-1)^{k+1}\frac{(1-x)^k}{k(k-1)!}\log(1-x) -C_{k+1} (1-x)^k + \sum_{j=1}^{k}
C_j\frac{x^{k-j+1}}{(k-j+1)!} + C_{k+1}\\ = \sum_{n=1}^{\infty}\frac{x^{n+k}}{n(n+1)\cdots(n+k-1)(n+k)}.
\end{multline}
The last two terms on the left side of the equation (\ref{mainthm2}) can be rewritten to get
\vspace{-8pt}
\begin{equation*}
\sum_{j=1}^{k}C_j\frac{x^{k-j+1}}{(k-j+1)!} + C_{k+1} = \sum_{j=1}^{k+1}
C_j\frac{x^{k-j+1}}{(k-j+1)!} = \sum_{j=1}^{k+1}
C_j\frac{x^{k+1-j}}{(k+1-j)!}.
\end{equation*}
This all implies that we have
\vspace{-10pt}
\begin{multline*}
(-1)^{k+1}\frac{(1-x)^{k}}{k!}\log(1-x) - C_{k+1} (1-x)^{k} + \sum_{j=1}^{k+1}
C_j\frac{x^{k+1-j}}{(k+1-j)!}\\ = \sum_{n=1}^{\infty}\frac{x^{n+k}}{n(n+1)\cdots(n+k-1)(n+k)},
\end{multline*}
as desired and required.
\end{proof}

The next step in the power series approach is to evaluate equation (\ref{maineqn}) at $x=-1$ and, with the help of L'Hospital rule, at $x=1$ (this will also justify why the interval of convergence in Theorem \ref{main_theorem} is $-1\leq x \leq 1$). To get the proper evaluation, we must multiply these results by $k!$ (the same way we multiplied our results by $2$ when we dealt with the usual triangular numbers).

We evaluate at $x=1$, to give the non-alternating sum.

\begin{corollary}\label{nontelescope1} For any integer $k>1$,
\begin{equation*}
\sum_{n=1}^{\infty}\frac{1}{T_k(n)}=\sum_{j=1}^{k} C_j \frac{k!}{(k-j)!}.
\end{equation*}
\end{corollary}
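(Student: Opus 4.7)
The plan is to derive the corollary by setting $x = 1$ in equation (\ref{maineqn}) of Theorem \ref{main_theorem} and then multiplying through by $k!$. The corollary is almost immediate once we verify that the boundary behavior is well-behaved, so the main work is handling the limits at $x \to 1^-$.

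First I would substitute $x = 1$ into the right-hand side of (\ref{maineqn}), which directly gives $\sum_{n=1}^\infty \frac{1}{n(n+1)\cdots(n+k-1)}$. Next I would take the limit as $x \to 1^-$ of each term on the left-hand side. The term $(-1)^k \frac{(1-x)^{k-1}}{(k-1)!}\log(1-x)$ tends to $0$ because $k > 1$ makes $k-1 \geq 1$; an application of L'H\^{o}pital's rule (as already invoked in the statement of Theorem \ref{main_theorem}) to $(1-x)^{k-1}\log(1-x)$ confirms this. The term $-C_k(1-x)^{k-1}$ likewise tends to $0$. The remaining finite sum $\sum_{j=1}^k C_j \frac{x^{k-j}}{(k-j)!}$ is a polynomial in $x$, so its value at $x=1$ is just $\sum_{j=1}^k \frac{C_j}{(k-j)!}$.

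Having established
\[\sum_{j=1}^{k}\frac{C_j}{(k-j)!} = \sum_{n=1}^{\infty}\frac{1}{n(n+1)\cdots(n+k-1)},\]
I would then multiply both sides by $k!$, distributing the factor through the finite sum on the left to obtain $\sum_{j=1}^k C_j \frac{k!}{(k-j)!}$, and noting on the right that $T_k(n) = \frac{n(n+1)\cdots(n+k-1)}{k!}$ so that $\frac{k!}{n(n+1)\cdots(n+k-1)} = \frac{1}{T_k(n)}$.

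The main obstacle, though a mild one, is justifying that equality in (\ref{maineqn}) persists in the limit as $x \to 1^-$ rather than only for $x$ strictly inside the interval of convergence. Theorem \ref{main_theorem} already asserts this via its parenthetical convention about the $x = 1$ case, so I would simply invoke that statement (which is ultimately a consequence of Abel's theorem applied to the convergent series $\sum 1/T_k(n)$, together with the continuity of the left-hand side at $x=1$ for $k>1$). Apart from this, the proof is entirely a matter of plugging in and simplifying.
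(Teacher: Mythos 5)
Your proof is correct and follows exactly the paper's route: evaluate equation (\ref{maineqn}) at $x=1$ (with l'H\^{o}pital's rule handling the $(1-x)^{k-1}\log(1-x)$ term) and multiply by $k!$. Your additional remarks on Abel's theorem and the vanishing of the $(1-x)^{k-1}$ terms simply spell out details the paper leaves implicit.
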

\begin{proof}
We evaluate (\ref{maineqn}) at $x=1$, with the aid of l'H\^{o}pital's rule. Then, we multiply everything by $k!$. The result is immediate.
\end{proof}
What is not immediate is the fact that the right side of Corollary~\ref{nontelescope1} equals $ k/(k-1)$, the result obtained in Theorem \ref{telescope2} using the telescoping series approach. Thankfully, the equality is true, as the following theorem shows.
\begin{theorem}\label{agreement}
For any integer $k> 1$,
\[\displaystyle \sum_{j=1}^{k} C_j \frac{k!}{(k-j)!} = \frac{k}{k-1}.\]
%\begin{equation*}
%\sum_{j=1}^{k} C_j \frac{k!}{(k-j)!} = \frac{k}{k-1}.
%\end{equation*}
\end{theorem}
\begin{proof}
We proceed by induction.  For manageability in the proof, we work to show that
\begin{equation}\label{agreement1}
\sum_{j=1}^{k}C_j\frac{(k-1)!}{(k-j)!}=\frac{1}{k-1},
\end{equation}
which is equivalent to the equation we want to prove. The base case of $k=2$ is immediate, so we assume that (\ref{agreement1}) holds for $k$. Let $L= \sum_{j=1}^{k+1} C_j\frac{k!}{(k+1-j)!}$, the left side of equation (\ref{agreement1}) with $k$ replaced by $k+1$. Then, we have
\begin{align*}
L =&\; \sum_{j=1}^{k} C_j\frac{k!}{(k+1-j)!} + C_{k+1}k!\\
=&\; \sum_{j=1}^{k}\left( C_j\frac{k!}{(k+1-j)!} - C_j(j-1)\frac{(k-1)!}{(k+1-j)!} + C_j(j-1)\frac{(k-1)!}{(k+1-j)!}\right)\\
& + C_{k+1}k!\\
=&\;  \sum_{j=1}^{k} C_j\frac{(k-1)!}{(k+1-j)!}\bigl(k-(j-1)\bigr) + \sum_{j=1}^{k} C_j(j-1)\frac{(k-1)!}{(k+1-j)!} + C_{k+1}k!\\
=&\;  \sum_{j=1}^{k} C_j\frac{(k-1)!}{(k-j)!} + \sum_{j=1}^{k} C_j(j-1)\frac{(k-1)!}{(k+1-j)!} + C_{k+1}k!\\
=&\; \frac{1}{k-1} + \sum_{j=1}^{k} C_j(j-1)\frac{(k-1)!}{(k+1-j)!} + C_{k+1}k!,
\end{align*}
%If we apply the inductive hypothesis, we get
%\[L=\frac{1}{k-1} + \sum_{j=1}^{k} C_j(j-1)\frac{(k-1)!}{(k+1-j)!} + C_{k+1}k!.\]
where in the last step we applied the inductive hypothesis. We note that $C_1=0$ and apply Lemma \ref{lemma1} to get that
\begin{align*}
L =&\; \frac{1}{k-1} +\sum_{j=2}^{k}\left(-\frac{C_{j-1}}{j-1}+\frac{(-1)^j}{(j-1)(j-1)!}\right)(j-1)\frac{(k-1)!}{(k+1-j)!} \\
&+ \left(-\frac{C_k}{k}+\frac{(-1)^{k+1}}{k\cdot k!}\right)k!\\
=&\; \frac{1}{k-1} -\sum_{j=2}^{k} C_{j-1}\frac{(k-1)!}{(k+1-j)!} - C_k (k-1)!+ \sum_{j=2}^{k} \frac{(-1)^{j}(k-1)!}{(k+1-j)!(j-1)!}\\
&+\frac{(-1)^{k+1}(k-1)!}{k!}\\
=&\;  \frac{1}{k-1} - \sum_{j=1}^{k-1} C_{j}\frac{(k-1)!}{(k-j)!} - C_k (k-1)! +\sum_{j=2}^{k+1}\frac{(-1)^{j}(k-1)!}{(k+1-j)!(j-1)!}\\
=&\; \frac{1}{k-1} -  \sum_{j=1}^{k} C_{j}\frac{(k-1)!}{(k-j)!}  +\sum_{j=1}^{k}\frac{(-1)^{j+1}(k-1)!}{(k-j)!j!}.
\end{align*}
Now, we apply the inductive hypothesis (again), and get
\begin{equation*}
L=  \frac{1}{k-1} - \frac{1}{k-1} - \frac{1}{k}\sum_{j=1}^{k}(-1)^j\binom{k}{j} = -\frac{1}{k} \left(\sum_{j=0}^{k}(-1)^j\binom{k}{j} - 1\right) = \frac{1}{k},
\end{equation*}
where we use the well-known fact that $\sum_{j=0}^{k}(-1)^j\binom{k}{j} = 0$. This completes the proof.
\end{proof}

Now that Theorem~\ref{agreement} is proved, we have an alternative proof of Theorem~\ref{telescope2}, which uses the power series approach and does not depend on the recognition of the series as telescoping.

Next, we apply Theorem~\ref{main_theorem} with $x=-1$ to evaluate the alternating sum of the reciprocals of generalized triangular numbers.

\begin{corollary}\label{alt_sum_result}
For any integer $k>1$,
\begin{equation*}
%\begin{split}
\sum_{n=1}^{\infty}  \frac{(-1)^{n+1}}{T_k(n)}   =  k2^{k-1}\log 2 + (-1)^{k+1} k!C_k 2^{k-1} + \sum_{j=1}^{k} C_j (-1)^j\frac{k!}{(k-j)!}.
%\end{split}
\end{equation*}
\end{corollary}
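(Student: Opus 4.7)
The plan is to apply Theorem~\ref{main_theorem} directly at $x=-1$, which is a valid evaluation point for $k > 1$ since the interval of convergence there is $-1\le x\le 1$. The proof will be a straightforward substitution followed by careful bookkeeping of signs, multiplication by $k!$ (analogous to multiplying by $2$ in the $k=2$ case), and recognition that the resulting series on the right-hand side differs from $\sum_{n=1}^{\infty}(-1)^{n+1}/T_k(n)$ only by an overall factor of $(-1)^k$.

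Concretely, I would first substitute $x=-1$ into \eqref{maineqn}. On the right-hand side, this produces
\begin{equation*}
\sum_{n=1}^{\infty}\frac{(-1)^{n+k-1}}{n(n+1)\cdots(n+k-1)} = (-1)^{k-1}\sum_{n=1}^{\infty}\frac{(-1)^{n}}{n(n+1)\cdots(n+k-1)},
\end{equation*}
while on the left-hand side, $1-x$ becomes $2$, so the three terms simplify to
\begin{equation*}
(-1)^k\frac{2^{k-1}}{(k-1)!}\log 2 \;-\; C_k\, 2^{k-1} \;+\; \sum_{j=1}^{k} C_j\frac{(-1)^{k-j}}{(k-j)!}.
\end{equation*}

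Next, I would multiply both sides by $k!$ and then by $(-1)^k$ to isolate $\sum_{n=1}^{\infty}(-1)^{n+1}/T_k(n)$ on the right. On the left, the logarithm term becomes $k\,2^{k-1}\log 2$ (since $(-1)^k\cdot(-1)^k=1$ and $k!/(k-1)!=k$), the $C_k$ term becomes $(-1)^{k+1}k!\,C_k\,2^{k-1}$, and the sum becomes $\sum_{j=1}^{k} C_j(-1)^{j}\,k!/(k-j)!$ after combining $(-1)^{k-j}\cdot(-1)^k=(-1)^{-j}=(-1)^j$. This matches the claimed formula exactly.

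The main (and really only) obstacle is sign-management: one must keep careful track of how the four factors $(-1)^{k}$, $(-1)^{k-1}$, $(-1)^{k-j}$, and $(-1)^{n+k-1}$ combine when the overall factor of $(-1)^k$ is absorbed. There are no analytic subtleties, because convergence at $x=-1$ (and hence legality of the substitution) is already supplied by Theorem~\ref{main_theorem}, and no additional identities beyond arithmetic with signs and factorials are needed.
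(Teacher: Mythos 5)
Your proposal is correct and follows exactly the paper's own argument: substitute $x=-1$ into equation \eqref{maineqn} and multiply the resulting identity by $(-1)^k k!$, with the sign bookkeeping working out just as you describe. Nothing further is needed.
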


\begin{proof}
\iffalse
We set $x=-1$ in equation (\ref{maineqn}) to get that
\begin{multline*}
(-1)^k\frac{2^{k-1}}{(k-1)!}\log 2 - C_k 2^{k-1} + \sum_{j=1}^{k}
C_j\frac{(-1)^{k-j}}{(k-j)!}\\ = \sum_{n=1}^{\infty}\frac{(-1)^{n+k-1}}{n(n+1)\cdots(n+k-1)}.
\end{multline*}
We multiply this by $(-1)^kk!$ to get the result.
\fi
We set $x=-1$ in equation (\ref{maineqn}) and multiply the result by $(-1)^kk!$ to get the desired equality.
\end{proof}

What remains is to show that the right-hand side of Corollary \ref{alt_sum_result} equals
\[k2^{k-1}\log 2 - k\sum_{i=1}^{k-1}\frac{2^{k-1-i}}{i}.\]
This will complete the power-series derivation of Theorem~\ref{telescope4}.  To achieve this, we use a standard binomial-harmonic identity (see \cite[Eq. (39)]{paule_schneider} or \cite[Identity 14]{spivey}).  For completeness, we include an elementary proof along the lines of the methods in \cite{knuth}.

\begin{lemma}\label{harmonic_partial_sums}
For any integer $n \geq 0$,
\[\displaystyle \sum_{m=0}^{n} \binom{n}{m} H_m = 2^{n} H_n - \sum_{i=1}^{n} \frac{2^{n-i}}{i},\]
where $H_\ell$ is as defined in Definition~\ref{H_definition}.
\end{lemma}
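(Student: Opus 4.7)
The plan is to prove the identity by induction on $n$, following a Knuth-style approach in which I first derive a recurrence for the sum and then reduce it to the claimed closed form. The base case $n = 0$ is immediate: both sides equal $0$, since $H_0 = 0$ and the sum $\sum_{i=1}^{0}$ is empty.

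For the inductive step, let $S_n := \sum_{m=0}^{n} \binom{n}{m} H_m$. My first goal is to derive the recurrence $S_{n+1} = 2 S_n + (2^{n+1}-1)/(n+1)$. To do this, I would apply Pascal's identity $\binom{n+1}{m} = \binom{n}{m} + \binom{n}{m-1}$ to split $S_{n+1}$ into two pieces. The first piece is just $S_n$. In the second piece I would reindex $m \mapsto m+1$, producing $\sum_{m=0}^{n} \binom{n}{m} H_{m+1}$, and then use $H_{m+1} = H_m + 1/(m+1)$ to separate it into another copy of $S_n$ plus the auxiliary sum $\sum_{m=0}^{n} \binom{n}{m}/(m+1)$. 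That auxiliary sum collapses at once via the absorption identity $\binom{n}{m}/(m+1) = \binom{n+1}{m+1}/(n+1)$, which yields $(2^{n+1} - 1)/(n+1)$ after summing $\binom{n+1}{m+1}$ for $m = 0, \ldots, n$.

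With the recurrence in hand, I would substitute the inductive hypothesis for $S_n$ and verify algebraically that the resulting expression equals $2^{n+1} H_{n+1} - \sum_{i=1}^{n+1} 2^{n+1-i}/i$. This verification only requires $H_{n+1} = H_n + 1/(n+1)$ together with the split $\sum_{i=1}^{n+1} 2^{n+1-i}/i = \sum_{i=1}^{n} 2^{n+1-i}/i + 1/(n+1)$, and the $1/(n+1)$ pieces from the two sides cancel the $-1/(n+1)$ picked up from the $-1$ in the numerator of the recurrence's forcing term. The main obstacle is nothing conceptual, only careful index bookkeeping, so I expect the computation to close on the first pass. As a slicker alternative I could instead invoke the integral representation $H_m = \int_{0}^{1} (1 - t^m)/(1 - t)\,dt$, rewrite $S_n = \int_{0}^{1} (2^n - (1+t)^n)/(1-t)\,dt$, and evaluate via the substitution $u = (1+t)/2$; this transforms the integrand into $(1 - u^n)/(1-u)$ on $[1/2, 1]$ against a factor of $2^n$, producing $2^n H_n$ minus exactly the geometric tail $\sum_{i=1}^{n} 2^{n-i}/i$ in one stroke.
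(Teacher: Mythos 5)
Your inductive argument is correct and is essentially the paper's own proof: the same recurrence $S_{n+1} = 2S_n + (2^{n+1}-1)/(n+1)$ obtained via Pascal's identity, the split $H_{m+1}=H_m+\tfrac{1}{m+1}$, and the absorption identity, followed by the same algebraic verification. (Your alternative via the integral representation of $H_m$ also checks out, but the main line of your proposal matches the paper.)
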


\begin{proof}
%We proceed by induction on $n$. For $n=0$, both sides of the equation are $0$. For $n=1$, we have $ \binom{1}{0}H_0 + \binom{1}{1}H_1 = 0 + 1 = 1$ on the left-hand side, while the right-hand side equals $2 H_1 - 1 = 2(1) - 1 = 1$. So, the identity holds for $n=1$.

The equality clearly holds for $n=0$ and $n=1$. Assume the identity holds for $n$. Define $S_n = \sum_{m=0}^{n} \binom{n}{m} H_m$.
 Consider that
\begin{align*}
S_{n+1} &=\sum_{m=0}^{n+1} \binom{n+1}{m} H_m 
=  \sum_{m=0}^{n+1} \left[ \binom{n}{m} + \binom{n}{m-1} \right] H_m \\
&= \sum_{m=0}^{n} \binom{n}{m} H_m + \sum_{m=1}^{n+1} \binom{n}{m-1} H_m = S_n + \sum_{k=0}^{n} \binom{n}{k} H_{k+1},
\end{align*}
where in the first step we use Pascal's identity. We proceed from above, noting that $H_{k+1} = H_k+1/(k+1)$, to get that

\begin{align*}
S_{n+1} &= S_n + \sum_{k=0}^{n} \binom{n}{k} \left( H_k + \frac{1}{k+1} \right) = S_n + S_n + \sum_{k=0}^{n} \binom{n}{k} \frac{1}{k+1} \\
&= 2S_n + \sum_{k=0}^{n} \binom{n}{k} \frac{n+1}{k+1}\frac{1}{n+1} = 2S_n + \sum_{k=0}^{n} \binom{n+1}{k+1} \frac{1}{n+1} \\
&= 2S_n + \frac{1}{n+1}\sum_{k=0}^{n} \binom{n+1}{k+1} = 2S_n + \frac{1}{n+1} (2^{n+1} - 1),
\end{align*}
where, in the fourth step, we used the absorption identity for binomial coefficients,
$\binom{n}{k} \frac{n+1}{k+1}=\frac{(n+1)!}{(k+1)!}$,
and, in the sixth step, we used the fact that the sum of binomial coefficients up to the $n+1^{\text{st}}$ term is $2^{n+1}$. %(in our sum, the first binomial coefficient is  missing, so we must subtract $1$).

Now, we apply the inductive hypothesis $ S_n = 2^n H_n - \sum_{i=1}^n \frac{2^{n-i}}{i}$ to get that
\begin{align*}
S_{n+1} &= 2 \left( 2^n H_n - \sum_{i=1}^n \frac{2^{n-i}}{i} \right) + \frac{2^{n+1}-1}{n+1} \\
&= 2^{n+1} H_n - \sum_{i=1}^n \frac{2^{n-i+1}}{i} + \frac{2^{n+1}}{n+1} - \frac{1}{n+1} \\
&= 2^{n+1} \left( H_n + \frac{1}{n+1} \right) - \left( \sum_{i=1}^n \frac{2^{n+1-i}}{i} + \frac{2^0}{n+1} \right) \\
&= 2^{n+1} H_{n+1} - \sum_{i=1}^{n+1} \frac{2^{n+1-i}}{i},
\end{align*}
which completes the inductive step.
\end{proof}

We prove the connecting result.

\begin{theorem}\label{the_most_difficult_theorem}
For any integer $k>1$,
\vspace{-10pt}
\begin{multline}\label{difficult_theorem_equation}
k2^{k-1}\log 2 + (-1)^{k+1} k!C_k 2^{k-1} + \sum_{j=1}^{k} C_j (-1)^j\frac{k!}{(k-j)!}\\= k2^{k-1}\log 2 - k\sum_{i=1}^{k-1}\frac{1}{i}2^{k-1-i}.
\end{multline}
\end{theorem}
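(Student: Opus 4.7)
The plan is to cancel the $k2^{k-1}\log 2$ that appears on both sides of (\ref{difficult_theorem_equation}) and reduce the claim to the identity
\[
(-1)^{k+1} k!\, C_k\, 2^{k-1} + \sum_{j=1}^{k} C_j (-1)^j\frac{k!}{(k-j)!} = -k\sum_{i=1}^{k-1}\frac{2^{k-1-i}}{i}.
\]
From there, the strategy is to substitute the definition $C_j = \frac{(-1)^j}{(j-1)!}H_{j-1}$ from Definition~\ref{H_definition} and watch the alternating signs collapse, then reindex so that Lemma~\ref{harmonic_partial_sums} can be applied directly.

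First I would handle the boundary term: plugging in $C_k=\frac{(-1)^k}{(k-1)!}H_{k-1}$ gives
\[
(-1)^{k+1}k!\,C_k\,2^{k-1} = -\,k\,2^{k-1}H_{k-1},
\]
since the factors of $(-1)$ cancel and $k!/(k-1)! = k$. Next I would turn to the summation: each term simplifies as
\[
C_j(-1)^j\frac{k!}{(k-j)!} = \frac{(-1)^{2j}}{(j-1)!}H_{j-1}\cdot\frac{k!}{(k-j)!} = k\binom{k-1}{j-1}H_{j-1},
\]
because $\frac{k!}{(j-1)!(k-j)!} = k\binom{k-1}{j-1}$. Reindexing with $m=j-1$ then yields
\[
\sum_{j=1}^{k}C_j(-1)^j\frac{k!}{(k-j)!} = k\sum_{m=0}^{k-1}\binom{k-1}{m}H_m.
\]

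At this stage the key input, Lemma~\ref{harmonic_partial_sums}, applies with $n=k-1$, giving
\[
k\sum_{m=0}^{k-1}\binom{k-1}{m}H_m = k\,2^{k-1}H_{k-1} - k\sum_{i=1}^{k-1}\frac{2^{k-1-i}}{i}.
\]
Adding the boundary contribution $-k\,2^{k-1}H_{k-1}$ makes the $H_{k-1}$ terms cancel, leaving exactly $-k\sum_{i=1}^{k-1}\frac{2^{k-1-i}}{i}$, which is what was required.

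I do not expect a genuine obstacle here — the identity is essentially an accounting exercise. The only mild subtlety is spotting the factorization $\frac{k!}{(j-1)!(k-j)!}=k\binom{k-1}{j-1}$, which is what reshapes the inner sum into precisely the form demanded by Lemma~\ref{harmonic_partial_sums}. Once that observation is in hand, the signs coming from the two factors of $(-1)^j$ inside $C_j(-1)^j$ kill each other automatically, and the $H_{k-1}$ produced by the lemma is exactly cancelled by the boundary term $(-1)^{k+1}k!\,C_k\,2^{k-1}$, so no additional identity is required to finish.
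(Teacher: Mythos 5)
Your proof is correct and follows essentially the same route as the paper: substitute the definition of $C_j$ so the signs cancel, recognize $\frac{k!}{(j-1)!(k-j)!}=k\binom{k-1}{j-1}$, reindex, and apply Lemma~\ref{harmonic_partial_sums} with $n=k-1$ so that the $H_{k-1}$ terms cancel against the boundary term. No gaps; this matches the paper's argument step for step.
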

\begin{proof}
From Definition~\ref{H_definition}, the second term on the left side of (\ref{difficult_theorem_equation}) can be written as
\begin{align}\label{equation_hard_theorem_1}
(-1)^{k+1} k! C_k 2^{k-1}
&= (-1)^{k+1} k!
\left( (-1)^k \frac{H_{k-1}}{(k-1)!} \right) 2^{k-1} \nonumber\\
&= -\,k \, 2^{k-1} H_{k-1}. 
\end{align}
In a similar vein, we rewrite the summation on the left side of (\ref{difficult_theorem_equation}).
\begin{align*}
\sum_{j=1}^{k} C_j (-1)^j \frac{k!}{(k-j)!}
&= \sum_{j=1}^{k} \frac{H_{j-1}}{(j-1)!} \frac{k!}{(k-j)!}.
\end{align*}
We reindex this sum to get that
\[
\sum_{j=1}^{k} \frac{H_{j-1}}{(j-1)!} \frac{k!}{(k-j)!} = \sum_{m=0}^{k-1} \frac{H_m}{m!} \frac{k!}{(k-1-m)!}.\]
%Rewrite the factorial expression:
Note that $\frac{k!}{m!(k-1-m)!}
= k \binom{k-1}{m}$.
Therefore,
\begin{equation}\label{equation_hard_theorem_2}
\sum_{j=1}^{k} C_j (-1)^j \frac{k!}{(k-j)!}
= k \sum_{m=0}^{k-1} \binom{k-1}{m} H_m.
\end{equation}
We use (\ref{equation_hard_theorem_1}) and (\ref{equation_hard_theorem_2}) on the left-hand side of the original equation to get
\begin{align}\label{rewriting}
&k2^{k-1}\log 2
- k2^{k-1} H_{k-1}
+ k \sum_{m=0}^{k-1} \binom{k-1}{m} H_m.
\end{align}
Now, we employ Lemma \ref{harmonic_partial_sums}
%\[
%\sum_{m=0}^{n} \binom{n}{m} H_m
%= 2^{n} H_n - \sum_{i=1}^{n} \frac{2^{n-i}}{i},
%\]
 with $n = k-1$ to the sum in (\ref{rewriting}) to see that the left-hand side is equal to
\begin{align*}
&k2^{k-1}\log 2
- k2^{k-1} H_{k-1}
+ k\left(2^{k-1} H_{k-1}
- \sum_{i=1}^{k-1} \frac{2^{k-1-i}}{i}\right).
\end{align*}
This results in
\[
k2^{k-1}\log 2
- k \sum_{i=1}^{k-1} \frac{2^{k-1-i}}{i},
\]
which is exactly the desired right-hand side of (\ref{difficult_theorem_equation}).
\end{proof}

Note that Corollary \ref{alt_sum_result} and Theorem \ref{the_most_difficult_theorem} provide a telescoping-free proof of Theorem~\ref{telescope4}.

\end{document}